\documentclass[preprint,review,12pt]{elsarticle}
\usepackage{amssymb, amsmath, amsthm, bm}
\usepackage[noend]{algpseudocode}
\usepackage{algorithmicx,algorithm}
\usepackage{graphicx}
\usepackage{caption}
\usepackage{epstopdf}
\usepackage[colorlinks,linkcolor=blue]{hyperref}
\usepackage{bbding}
\usepackage{booktabs}
\usepackage{tikz,xcolor}
\usepackage{diagbox}

\usepackage{subfig}

\usepackage{framed}
\usepackage{latexsym}
\usepackage{caption}
\usepackage{url}

\usepackage{amsmath}
\usepackage{amssymb}
\usepackage{mathtools}
\usepackage{amsthm}
\usepackage{makecell}
\usepackage{multirow}

\makeatletter
\renewcommand\paragraph{\@startsection{paragraph}{4}{\z@}%
  {10\p@ \@plus 6\p@ \@minus 3\p@}%
  {-6\p@}%
  {\normalfont\normalsize\bfseries}}
\makeatother

\theoremstyle{plain}
\newtheorem{theorem}{Theorem}[section]

\theoremstyle{definition}

\theoremstyle{remark}

\journal{}

\begin{document}	
\begin{frontmatter}
\title{HiLNO: A Hierarchical Latent Neural Operator with Multi-Scale Supervision for PDEs on General Geometries}
\author[label1,label2] {Zhicheng Hu \corref{cor1}}
\author[label1,label2] {Jiacheng Li \fnref{cor2}}
\author[label3] {Min Yang \fnref{cor3}}

\cortext[cor1] {Corresponding author: huzhicheng@nuaa.edu.cn}
\fntext[cor2] {jia-cheng.li@nuaa.edu.cn}
\fntext[cor3] {yang@ytu.edu.cn}

\address[label1]{School of Mathematics, Nanjing University of Aeronautics and Astronautics, Nanjing, 211106, China}
\address[label2]{Key Laboratory of Mathematical Modelling and High Performance Computing of Air Vehicles (NUAA), MIIT, Nanjing, 211106, China}
\address[label3]{School of Mathematics and Information Sciences, Yantai University, Yantai, China}

\begin{abstract}
Latent neural operators improve the efficiency of operator learning for partial differential equations (PDEs) by performing the main computation on compact latent representations.
However, directly compressing the input representation to obtain such compact representations may discard solution-relevant spatial information, especially for PDE solutions with multiscale structures.
To address this problem, we propose HiLNO, a hierarchical latent neural operator that constructs a fine-to-coarse-to-fine latent space and further introduces multi-scale supervision (MSS) and anisotropic Gaussian attention.
The hierarchy mitigates potential information loss during compression, while MSS aligns intermediate predictions with downsampled target fields, encouraging solution-relevant structures to be captured across multiple spatial scales.
Anisotropic Gaussian attention enables feature transfer across the hierarchy, making HiLNO applicable to general geometries.
Experiments on representative PDE benchmarks and a large-scale automotive aerodynamics task show that HiLNO achieves competitive predictive accuracy, 
while reducing the parameter count by an average of $84.4\%$ and FLOPs by an average of $69.2\%$ compared with LinearNO.
Additional experiments demonstrate effective generalization to unseen spatial resolutions.
Code is available at https://github.com/JcLimath/HiLNO.
\end{abstract}

\begin{keyword}
Neural operator; Latent representations; Multi-scale supervision; General geometries
\end{keyword}
	
\end{frontmatter}

\section{Introduction}
\label{introduction}

Partial differential equations (PDEs) provide a fundamental mathematical framework for describing a wide range of scientific and engineering systems. 
Classical numerical methods, such as finite difference and finite volume methods \cite{mazumder2016numerical}, are widely used to solve PDEs and can attain high accuracy through mesh refinement, but repeated simulations are often computationally expensive.
In recent years, neural operators have emerged as promising data-driven surrogates that learn mappings from input functions to solution functions and, once trained, rapidly predict outputs for unseen inputs \cite{chen2026information, no2023, liu2026GINO}. 
However, practical PDE problems often involve large numbers of sampling points, making dense interactions among all points expensive in both computation and memory.

Latent neural operators provide an efficient approach by projecting features defined on the physical discretization into compact latent representations \cite{KARUMURI2026118599, LI2025113705, LU2026133883}.
The main computation is then performed in latent space, reducing the computational dependence on discretization size.
For example, PiT \cite{chen2024positional} projects input features onto a coarse grid, Transolver \cite{wu2024Transolver} projects them onto a set of physics-aware slices, and IPOT \cite{lee2024ipot} projects them onto a set of inducing points.
Although such projections substantially reduce the cost of global interactions, direct compression into compact latent representations may discard solution-relevant spatial information, making multiscale structures more difficult to represent \cite{hagnberger2025calm, wu2023lsm}.

This observation suggests replacing direct compression with a progressive process across multiple spatial resolutions, inspired by the multilevel processing used in classical multigrid methods \cite{he2024mgno, hu2014193}.
Related hierarchical designs have been explored in neural operators.
For example, LSM \cite{wu2023lsm} progressively constructs latent representations at multiple resolutions through hierarchical downsampling, while CALM-PDE \cite{hagnberger2025calm} progressively coarsens the spatial representation using continuous convolutions.
However, although hierarchical designs naturally provide representations at multiple spatial resolutions, training is typically dominated by the final-resolution objective, leaving intermediate representations only indirectly guided.
This may prevent the hierarchy from fully capturing solution structures across scales.

To address these limitations, we propose HiLNO, a hierarchical latent neural operator built on a supervised hierarchical latent space.
HiLNO follows a fine-to-coarse-to-fine paradigm, with the encoder and decoder performing hierarchical compression and reconstruction, respectively, while the processor approximates the operator at the coarsest latent level to avoid costly dense interactions.
To provide direct guidance for intermediate representations, we introduce multi-scale supervision (MSS) along the reconstruction path.
MSS matches intermediate predictions with downsampled target fields at the corresponding resolutions, encouraging solution-relevant structures to be captured across multiple spatial scales.
We further introduce lightweight anisotropic Gaussian attention for feature transfer throughout the hierarchy, making HiLNO applicable to general geometries.

We evaluate HiLNO on representative PDE benchmarks and a three-dimensional industrial application.
The results show that HiLNO achieves competitive predictive accuracy while requiring substantially fewer parameters and lower computational costs than representative neural operator baselines.
Additional experiments demonstrate effective generalization to unseen spatial resolutions.
The remainder of this paper is organized as follows. Section~\ref{related_works} reviews related work on neural PDE solvers and latent space learning. Section~\ref{method} presents the proposed HiLNO framework in detail. Section~\ref{experiments} reports numerical experiments and model analysis. Section~\ref{conclusion} concludes the paper.

\section{Related Work}
\label{related_works}
\subsection{Neural PDE Solvers}

Physics-informed neural networks \cite{raissi2019} incorporate governing equations into neural network training and have been widely studied for solving PDEs.
However, they are commonly trained to approximate the solution of a specific PDE instance under prescribed conditions.
Neural operators instead learn mappings between function spaces and can therefore be reused across varying coefficients, source terms, or geometries \cite{DENG2025110109, hao23c, no2023}.
This operator learning paradigm has become an effective data-driven approach for constructing PDE surrogate models.

A variety of neural operator architectures have been developed for learning PDE solution operators \cite{nows2026, li2021fno, liu2024mitigating, TRIPURA2023115783}.
DeepONet \cite{lu2021learning} approximates nonlinear operators through a branch--trunk architecture, motivated by the universal approximation theorem for operators.
FNO \cite{li2021fno} parameterizes the integral kernel in the Fourier domain and evaluates global interactions efficiently using the fast Fourier transform.
Owing to its simple implementation and strong performance, FNO has become a widely used baseline in operator learning \cite{tran2023factorized}.
Subsequent studies have extended Fourier-based neural operators to improve scalability \cite{wen2022u}, data efficiency \cite{george2024incremental}, and applicability to complex geometries \cite{li2023fourier}.

Attention mechanisms have increasingly been adopted in PDE operator learning because they provide a flexible way to model pairwise interactions between spatial locations~\cite{LIU2026109040}.
FactFormer \cite{li2023scalable} factorizes the attention kernel along spatial axes to improve efficiency and stability, 
while CViT \cite{cvit2025} uses query-wise cross-attention to map encoded input features to arbitrary spatial locations.
Linear attention mechanisms have also been explored in OFormer \cite{li2023transformer}, GNOT \cite{hao23c}, and ONO \cite{ono2024} to reduce the quadratic cost of standard attention.
Despite these advances, many neural operators still operate on representations defined over the physical discretization, making large discretizations computationally and memory intensive.
This motivates performing the main operator computation on compact latent representations.

\subsection{Latent Space Learning}

Latent space learning has been widely used in fields such as natural language processing and computer vision to reduce the cost of processing high-dimensional data \cite{Zhang2025gpstoken}.
The main idea is to transform raw inputs into compact representations and perform the dominant feature interactions in the resulting latent space.
In computer vision, patch-based tokenization groups pixels into patch tokens, allowing subsequent computation to be performed over a shorter token sequence, as used in the Vision Transformer \cite{vit2021} and Swin Transformer \cite{liu2021swin}.
However, these designs are primarily developed for regularly sampled images with fixed spatial organization and cannot be directly applied to PDE data represented on diverse discretizations.

Motivated by the efficiency of computation in latent space, latent-space approaches have increasingly been explored in operator learning for PDEs \cite{alkin2024universal, LONGHI2026118394, sun2026latent}.
LNO \cite{wang2024LNO} employs cross-attention to map geometric-space features into latent tokens. 
Transolver~\cite{wu2024Transolver} constructs physics-aware slices to model correlations among physical states. 
LANO \cite{sun2026latent} introduces a gated physics-adaptive encoder to obtain discriminative latent physical representations.
Geometry can also be incorporated into latent-space construction.
PiT~\cite{chen2024positional} defines latent features on a prescribed coarse mesh.
AROMA \cite{aroma2024} learns geometry-aware latent representations from the input geometry.
These methods demonstrate the effectiveness of computation in latent space, but typically obtain compact latent representations through direct compression. 
Such compression may discard solution-relevant spatial information, particularly for PDE solutions with multiscale structures.

Hierarchical architectures therefore provide a promising direction by organizing representations and computation across multiple resolutions \cite{he2024mgno}.
CNO \cite{raonic2023convolutional} employs a convolutional encoder--decoder architecture with representations learned at different spatial resolutions.
LSM \cite{wu2023lsm} constructs latent representations through hierarchical projection. CALM-PDE \cite{hagnberger2025calm} progressively coarsens spatial representations using continuous convolutions.
These studies demonstrate the potential of hierarchical computation for PDE operator learning. However, intermediate representations are typically guided only indirectly by the final prediction objective, which may limit their ability to capture solution structures across scales. HiLNO extends latent-space learning to a supervised hierarchical latent space, where multi-scale supervision directly guides intermediate representations.

\section{Methods}
\label{method}

In this section, we introduce HiLNO, a hierarchical latent neural operator that constructs a fine-to-coarse-to-fine hierarchical latent space and incorporates multi-scale supervision and anisotropic Gaussian attention.
Section~\ref{subsec:hierarchy} presents the spatial supports of the hierarchy, followed by the encoder, latent processor, decoder, and multi-scale supervision.
Section~\ref{subsec:gaussian} then introduces anisotropic Gaussian attention for feature transfer across different resolutions.

\paragraph{Problem Setup}
Consider a bounded domain $\Omega \subset \mathbb{R}^{D}$.
Let $a$ denote the problem-dependent input, which may include coefficient fields, geometric descriptors, or forcing terms, and let $u$ denote the target function, typically the PDE solution or a solution-related physical quantity. We denote the underlying operator by
\begin{equation}
\mathcal{G}^{\dagger}:a \mapsto u.
\end{equation}
The objective is to approximate $\mathcal{G}^{\dagger}$ using a parameterized neural operator $\mathcal{G}_{\theta}$ from a given dataset
\begin{equation}
	\mathcal{D}
	=
	\left\{
	\left(
	a^{(j)}|_{X},
	u^{(j)}|_{X}
	\right)
	\right\}_{j=1}^{J},
	\label{eq:dataset}
\end{equation}
where $X=\{\bm{x}^i\}_{i=1}^{N}\subset\Omega$ denotes the set of discretization points used to sample the input and output functions, and $N$ is the number of discretization points.
In practical applications, 
$N$ is often large to accurately represent complex geometries and capture fine-scale solution structures.

\paragraph{Overview of HiLNO}

As illustrated in Figure~\ref{lhno_framework}, HiLNO comprises a fine-to-coarse encoder, a coarse latent processor, and a coarse-to-fine decoder.
The encoder progressively compresses the input representation across multiple latent levels, the processor performs the main operator computation at the coarsest level, and the decoder reconstructs the representation on the original discretization.
The forward process is summarized as
\begin{equation}
	(z^0,X^0)
	\rightarrow
	\cdots
	\rightarrow
	(z^L,X^L)
	\rightarrow
	(\hat z^L,X^L)
	\rightarrow
	\cdots
	\rightarrow
	(\hat z^0,X^0).
	\label{eq:hilno_overview}
\end{equation}

Here, $X^0=X$, and $\{X^\ell\}_{\ell=0}^{L}$ denotes the fine-to-coarse hierarchy of spatial supports for latent representations.
The feature $z^\ell$ denotes the encoder-side representation on $X^\ell$, while $\hat z^\ell$ denotes the corresponding decoder-side representation, with $\hat z^L$ produced by the latent processor.
Compared with existing hierarchical designs such as LSM \cite{wu2023lsm} and CALM-PDE \cite{hagnberger2025calm}, 
HiLNO maps decoder-side representations $\hat z^\ell$ to predictions $\hat u^\ell$ at the corresponding resolutions, where $\hat u^0$ is the final prediction and $\hat u^\ell$ serve as intermediate predictions for multi-scale supervision.

\begin{figure}[t]
	\centering
	\includegraphics[width=\linewidth]{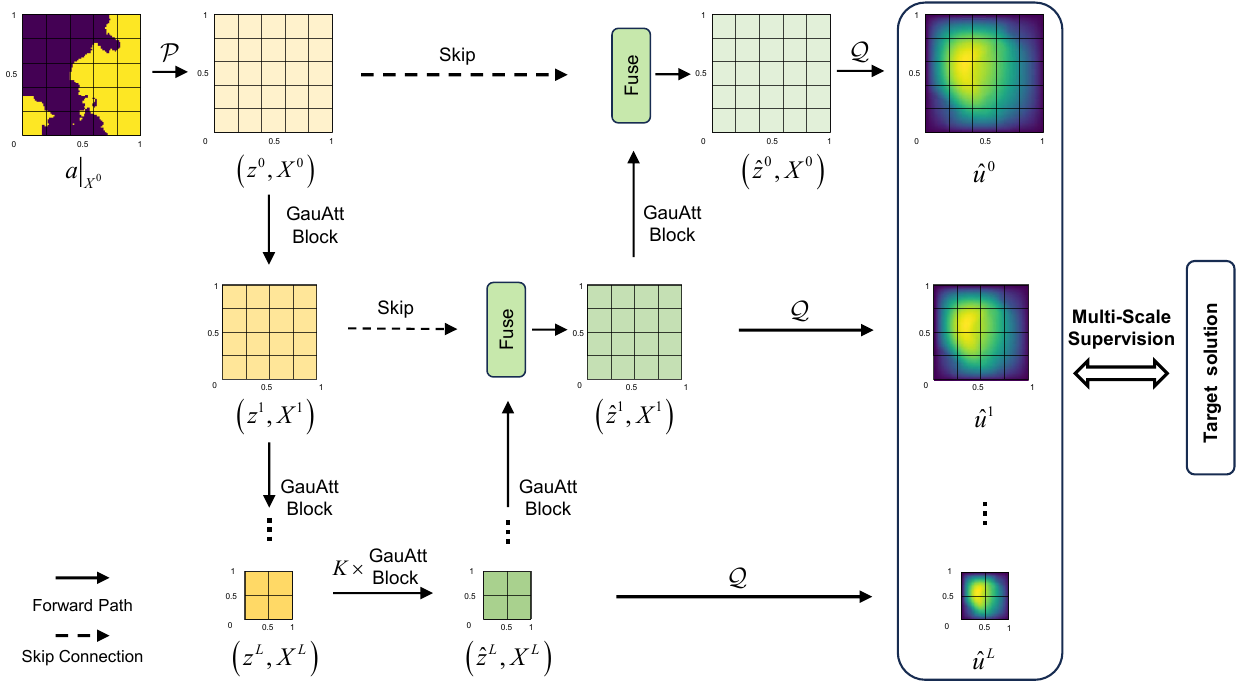}
	\caption{
		Overall architecture of HiLNO.
		Latent representations follow a fine-to-coarse-to-fine hierarchy with multi-scale supervision at intermediate resolutions.
		Gaussian attention blocks (GauAttBlocks) transfer features between adjacent levels, and the shared projection $\mathcal{Q}$ produces multi-scale predictions.
        }
	\label{lhno_framework}
\end{figure}

\subsection{Supervised Hierarchical Latent Space}
\label{subsec:hierarchy}

\paragraph{Spatial Supports of the Hierarchy}

In HiLNO, different levels of the hierarchical latent space correspond to different spatial resolutions, facilitating the representation of solution structures across multiple spatial scales.
We therefore construct an ordered sequence of spatial supports with progressively fewer points.
Latent representations at deeper levels are defined on increasingly coarse discretizations, giving the hierarchical latent space a clear spatial interpretation.

Specifically, we define
\begin{equation}
	X^0=X,
	\qquad
	X^\ell\subset X,
	\qquad
	N_\ell=|X^\ell|<N_{\ell-1}=|X^{\ell-1}|,
	\quad \ell=1,\ldots,L.
	\label{eq:hierarchical_point_sets}
\end{equation}
Here, $X^\ell$ provides the spatial support for the latent representation $z^\ell$ at level $\ell$.
Importantly, we do not require nested supports, i.e., $X^\ell\subset X^{\ell-1}$ is not imposed.
Instead, each $X^\ell$ is sampled directly from the original discretization $X$, allowing each level to independently cover the computational domain while ensuring that target values for multi-scale supervision can be obtained directly from the original solution field without interpolation.
The specific sampling strategies used to construct these supports are described in the experimental section.

\paragraph{Fine-to-Coarse Encoder}

Let $a|_{X^0}$ denote the discretized input function, and let
$\{X^\ell\}_{\ell=0}^{L}$ denote the spatial supports defined above.
A pointwise lifting layer first maps the input function to the initial feature representation
\begin{equation}
	z^0
	=
	\mathcal{P}\left(a|_{X^0}\right)
	\in
	\mathbb{R}^{N_0\times C},
	\label{eq:lifting}
\end{equation}
where $N_0=|X^0|$, $C$ is the feature dimension, and $\mathcal{P}$ is implemented as a multilayer perceptron (MLP).

The encoder then transfers features along the fine-to-coarse path.
For $\ell=0,\ldots,L-1$, the representation on the next coarser point set is obtained as
\begin{equation}
	z^{\ell+1}
	=
	\operatorname{GauAttBlock}
	\left(
	z^\ell,
	X^\ell,
	X^{\ell+1}
	\right),
	\label{eq:encoder}
\end{equation}
where $\operatorname{GauAttBlock}$ denotes the Gaussian attention block illustrated in Figure~\ref{GA_block} and formally defined in Section~\ref{subsec:gaussian}.
By transferring features between adjacent resolutions, the encoder avoids a direct projection from the full discretization to the coarsest latent point set.

\paragraph{Coarse Latent Processor}

The processor updates the representation on the coarsest point set $X^L$.
For a processor consisting of $K$ Gaussian attention blocks, let
$z^{L,0}=z^L$.
The representation is updated successively as
\begin{equation}
	z^{L,k+1}
	=
	\operatorname{GauAttBlock}
	\left(
	z^{L,k},
	X^L,
	X^L
	\right),
	\qquad
	k=0,\ldots,K-1.
	\label{eq:processor}
\end{equation}
The processor output is denoted by
$\hat z^L=z^{L,K}$.
Since $N_L\ll N_0$, performing the main operator updates on $X^L$ reduces the cost of repeated feature interactions compared with operating directly on the original discretization.

\paragraph{Coarse-to-Fine Decoder}

The decoder progressively reconstructs the latent representations along the coarse-to-fine path.
For $\ell=L,\ldots,1$, the representation on $X^\ell$ is first transferred to the finer point set $X^{\ell-1}$
\begin{equation}
	\tilde z^{\ell-1}
	=
	\operatorname{GauAttBlock}
	\left(
	\hat z^\ell,
	X^\ell,
	X^{\ell-1}
	\right).
	\label{eq:decoder_transfer}
\end{equation}
The transferred feature is then fused with the encoder feature at the corresponding resolution
\begin{equation}
	\hat z^{\ell-1}
	=
	\operatorname{MLP}
	\left(
	\tilde z^{\ell-1}
	+
	z^{\ell-1}
	\right).
	\label{eq:decoder_fusion}
\end{equation}
The skip connection is introduced to improve training stability while reintroducing fine-scale information retained by the encoder at the corresponding resolution.
The resulting decoder-side representations
$\{\hat z^\ell\}_{\ell=0}^{L}$
are used to generate solution predictions at the corresponding resolutions.

\paragraph{Multi-Scale Supervision}

When training relies solely on the final-resolution loss, intermediate representations are optimized only indirectly through the final prediction.
To provide direct guidance at multiple spatial scales, we introduce auxiliary supervision at every decoded level.
Since each decoded representation $\hat z^\ell$ is associated with the point set $X^\ell$, it is mapped to a solution prediction on the corresponding discretization using a shared MLP pointwise projection layer $\mathcal{Q}$ as
\begin{equation}
	\hat u^\ell
	=
	\mathcal{Q}
	\left(
	\hat z^\ell
	\right),
	\qquad
	\ell=0,\ldots,L.
	\label{eq:projection}
\end{equation}
Here, $\hat u^0$ is the final prediction on $X^0$, whereas
$\{\hat u^\ell\}_{\ell=1}^{L}$
are intermediate predictions on the coarser point sets.
Because $X^\ell\subset X$, the corresponding target field can be obtained directly as
\begin{equation}
	u^\ell=u|_{X^\ell},
	\qquad
	\ell=0,\ldots,L,
	\label{eq:multiscale_target}
\end{equation}
without interpolation.

The multi-scale supervision objective is defined as
\begin{equation}
	\mathcal{L}_{\mathrm{MS}}(\theta)
	=
	\frac{1}{J}
	\sum_{j=1}^{J}
	\sum_{\ell=0}^{L}
	\omega_\ell
	\mathcal{L}
	\left(
	\hat u^{(j),\ell},
	u^{(j),\ell}
	\right),
	\label{eq:ms_loss}
\end{equation}
where $J$ is the number of training samples, $\omega_\ell$ is the loss weight assigned to level $\ell$, and
$\mathcal{L}$ denotes the task-specific loss function.
This multi-scale supervision directly guides intermediate representations, helping the hierarchy retain solution-relevant information across multiple scales.

\subsection{Anisotropic Gaussian Attention for Cross-level Transfer}
\label{subsec:gaussian}
The hierarchical latent space introduced above requires repeated feature propagation between adjacent spatial supports.
For this purpose, we introduce an anisotropic Gaussian attention mechanism that constructs transfer weights directly from the relative positions of the source and target points.

\paragraph{Anisotropic Gaussian Weights}

Let
$X=\{\bm{x}^{i}\}_{i=1}^{N_s}$
and
$Y=\{\bm{y}^{j}\}_{j=1}^{N_t}$
denote the source and target point sets in a
$D$-dimensional domain, 
with source features
$z^s\in\mathbb{R}^{N_s\times C}$ on $X$ and target features
$z^t\in\mathbb{R}^{N_t\times C}$ on $Y$.

Let
$
\bm{\sigma}
=
(\sigma_{1},\ldots,\sigma_{D})
\in\mathbb{R}_{+}^{D}
$
denote a learnable axis-aligned anisotropic Gaussian scale, where
$\sigma_{d}$ controls the spatial decay along the $d$-th coordinate direction.
This minimal anisotropic parameterization uses only $D$ learnable parameters to capture direction-dependent decay, making it readily applicable to higher-dimensional problems.
For
$\bm{x}^{i}=(x_1^i,\ldots,x_D^i)\in X$
and
$\bm{y}^{j}=(y_1^j,\ldots,y_D^j)\in Y$,
define the anisotropic Gaussian kernel as
\begin{equation}
	g_{\bm{\sigma}}(\bm y^j,\bm x^i)
	=
	\exp
	\left[
	-
	\sum_{d=1}^{D}
	\left(
	\frac{y_d^j-x_d^i}{\sigma_d}
	\right)^2
	\right].
	\label{eq:gaussian_kernel}
\end{equation}
The normalized Gaussian attention weights are then given by
\begin{equation}
	a_{ji}
	=
	\frac{
		g_{\bm{\sigma}}(\bm y^j,\bm x^i)
	}{
		\sum_{k=1}^{N_s}
		g_{\bm{\sigma}}(\bm y^j,\bm x^k)
	}.
	\label{eq:gaussian_weights}
\end{equation}
Here, $a_{ji}$ measures the contribution of the source point
$\bm{x}^{i}$ to the target point $\bm{y}^{j}$.
Following the locality strategy in PiT \cite{chen2024positional}, the attention can optionally be restricted by a locality ratio
$P_{\mathrm{loc}}\in(0,1]$.
For each target point $\bm{y}^j$, 
only the
$\lceil P_{\mathrm{loc}}N_s\rceil$ source points with the smallest Euclidean distances participate in the normalization, while the remaining weights are set to zero.
The value of $P_{\mathrm{loc}}$ is left as a hyperparameter; see Section~\ref{vis_gauatt}.

Collecting the pairwise transfer weights gives the attention matrix as
\begin{equation}
	A
	=
	\left[
	a_{ji}
	\right]
	\in
	\mathbb{R}^{N_t\times N_s}.
	\label{eq:gaussian_attention_matrix}
\end{equation}
Gaussian attention (GauAtt) transfers features from $X$ to $Y$, with the output given by
\begin{equation}
	z^t = \operatorname{GauAtt}(z^s,X,Y) = A z^s W_v,
	\label{gaussian_transfer}
\end{equation}
where $W_v\in\mathbb{R}^{C\times C}$ is a learnable value projection.

\paragraph{Multi-Head Gaussian Attention}

To enhance representation capacity, following the standard multi-head attention setting \cite{vaswani2017attention}, we introduce multi-head Gaussian attention, in which each head is assigned an independent learnable Gaussian scale.
For the multi-head extension, the source feature is first projected and then partitioned along the feature dimension into $H$ subspaces:
\begin{equation}
z^s W_v
=
\operatorname{Concat}
\left(
V^{(1)},\ldots,V^{(H)}
\right),
\qquad
V^{(h)}\in\mathbb{R}^{N_s\times C_H},
\quad
C_H=C/H.
\end{equation}
Here, $V^{(h)}$ denotes the feature subspace associated with the $h$-th head, and $\operatorname{Concat}(\cdot)$ denotes concatenation along the feature dimension.

For the $h$-th head, let $A^{(h)}$ denote the Gaussian attention matrix constructed according to Equation~\eqref{eq:gaussian_attention_matrix} using its own learnable Gaussian scale.
The corresponding feature subspace is transferred from $X$ to $Y$, yielding
\begin{equation}
	O^{(h)}
	=
	A^{(h)}V^{(h)}
	\in
	\mathbb{R}^{N_t\times C_H},
	\qquad
	h=1,\ldots,H.
	\label{eq:gaussian_head}
\end{equation}
The outputs of all heads are concatenated and linearly projected.
The final output of multi-head Gaussian attention (MHGauAtt) is given by
\begin{equation}
	\operatorname{MHGauAtt}(z^s,X,Y)
	=
	\operatorname{Concat}
	\left(
	O^{(1)},\ldots,O^{(H)}
	\right)W_o,
	\label{eq:multihead_gaussian_attention}
\end{equation}
where $W_o\in\mathbb{R}^{C\times C}$ denotes the learnable output projection.
Head-specific Gaussian scales enable different heads to capture distinct spatial interaction patterns, as illustrated in Section~\ref{vis_gauatt}.

\begin{figure}[t]
	\centering
	\includegraphics[width=\linewidth]{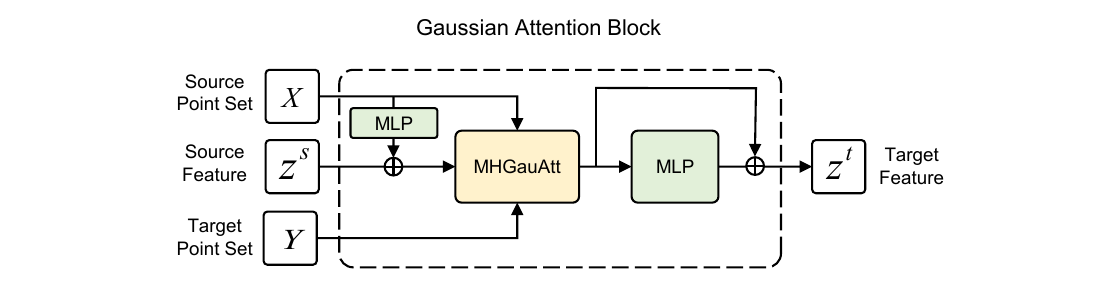}
	\caption{
		Structure of the Gaussian attention block, consisting of a position embedding, multi-head Gaussian attention (MHGauAtt), and a residual MLP connection.
	}
	\label{GA_block}
\end{figure}

\paragraph{Gaussian Attention Block}

We construct a Gaussian attention block (GauAttBlock) as the basic feature-transfer unit in HiLNO, as illustrated in Figure~\ref{GA_block}.
The block combines pointwise position embeddings with multi-head Gaussian attention, incorporating both the absolute positions of the source points and their spatial relations to the target points.

Specifically, for a source feature
$z^s\in\mathbb{R}^{N_s\times C}$
defined on $X$, the source coordinates are first mapped to a position embedding as
\begin{equation}
	p^s
	=
	\phi_x(X)
	\in
	\mathbb{R}^{N_s\times C},
	\label{eq:position_embedding}
\end{equation}
where $\phi_x$ is implemented as a pointwise MLP.
The position embedding is then added to the source feature before multi-head Gaussian attention
\begin{equation}
	h^t
	=
	\operatorname{MHGauAtt}
	\left(
	z^s+p^s,
	X,
	Y
	\right),
	\qquad
	z^t
	=
	h^t
	+
	\operatorname{MLP}
	\left(
	h^t
	\right).
	\label{eq:gaussian_attention_block}
\end{equation}
Here, the multi-head Gaussian attention transfers the augmented source feature from $X$ to $Y$, while the residual MLP further refines the resulting target representation.
The complete block is written as
\begin{equation}
z^t=\operatorname{GauAttBlock}(z^s,X,Y).
\end{equation}

\paragraph{Properties of Gaussian Attention}

Unlike standard attention \cite{vaswani2017attention}, Gaussian attention constructs transfer weights solely from relative positions, avoiding query--key similarity computation and keeping the weight construction lightweight.
This position-based formulation is closely related to PiT \cite{chen2024positional}, but generalizes its scalar distance scaling by introducing independent Gaussian scales for different coordinate directions, allowing anisotropic spatial interactions to be modeled.
Since the resulting transfer is defined directly from point coordinates, it does not rely on regular-grid structure and can be applied to diverse spatial discretizations.

Beyond its discrete formulation, Gaussian attention admits a continuous-operator interpretation under source-point refinement.
The following theorem formalizes this connection and establishes the convergence of Gaussian attention.

\begin{theorem}
	\label{thm:gaussian_attention_limit}
    Let $\{X^n\}_{n=1}^{\infty}$ be a sequence of source point sets sampled from a fixed probability measure $\mu_\Omega$ on $\Omega$.
    For each $n$,
    $$
    X^n=\{\bm{x}^{n,i}\}_{i=1}^{N_n}\subset\Omega\subset\mathbb{R}^D,
    \qquad
    \bm{x}^{n,i}\overset{\mathrm{i.i.d.}}{\sim}\mu_\Omega,
    $$
    with $N_n\to\infty$.
	Let $Y=\{\bm{y}^j\}_{j=1}^{N_t}\subset\Omega$ be a finite target point set. 
	Assume that $v(\bm{x}): \Omega \to \mathbb{R}^C$ is bounded and measurable on $\Omega$, and let $z_n^s$ denote its values sampled on $X^n$. 
	For a fixed $\bm{\sigma}\in\mathbb{R}_+^D$, as $n\to+\infty$, the Gaussian attention defined in 
	Equations~\eqref{eq:gaussian_kernel}--\eqref{gaussian_transfer} converges to an integral operator.
    Specifically, for any $\varepsilon>0$,
	\begin{equation}
	\lim_{n\to+\infty}
	\Pr
	\left\{
	\left\|
	\operatorname{GauAtt}(z_n^s,X^n,Y)
	-
	\mathcal{F}|_Y
	\right\|
	\le \varepsilon
	\right\}
	=
	1,
	\end{equation}
	where, 
	\begin{equation}
	\mathcal{F}(\bm{y})
	=
	\int_\Omega
	\kappa_{\bm{\sigma}}(\bm{y},\bm{x})
	v(\bm{x})W_v
	\,d\mu_\Omega(\bm{x}),
    \end{equation}
	and
	\begin{equation}
	\kappa_{\bm{\sigma}}(\bm{y},\bm{x})
	=
	\frac{
	g_{\bm{\sigma}}(\bm{y},\bm{x})
	}{
	\displaystyle
	\int_{\Omega}
	g_{\bm{\sigma}}(\bm{y},\bm{x}')
	d\mu_{\Omega}(\bm{x}')
	}
	\label{eq:continuous_gaussian_kernel}
	\end{equation}
	is the anisotropic integral kernel induced by Gaussian attention.
\end{theorem}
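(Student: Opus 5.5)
The plan is to write the discrete Gaussian attention at each target point as a ratio of two empirical averages and apply the weak law of large numbers to numerator and denominator separately. Because $Y$ is finite and $C$ is fixed, convergence in probability at each target point $\bm y^j$ implies convergence in probability of the whole matrix in any norm. We therefore fix $j$ and work componentwise.

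For fixed $\bm y=\bm y^j$, dividing numerator and denominator of Equation~\eqref{eq:gaussian_weights} by $N_n$ gives
\begin{equation*}
\operatorname{GauAtt}(z_n^s,X^n,Y)_j
=\frac{\frac{1}{N_n}\sum_{i=1}^{N_n} g_{\bm\sigma}(\bm y,\bm x^{n,i})\,v(\bm x^{n,i})}{\frac{1}{N_n}\sum_{k=1}^{N_n} g_{\bm\sigma}(\bm y,\bm x^{n,k})}\,W_v
=: \frac{S_n(\bm y)}{T_n(\bm y)}\,W_v .
\end{equation*}
Here we take $P_{\mathrm{loc}}=1$, i.e.\ no locality truncation, which is implicit in the statement's use of Equations~\eqref{eq:gaussian_kernel}--\eqref{gaussian_transfer}. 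The summands $g_{\bm\sigma}(\bm y,\bm x^{n,i})v(\bm x^{n,i})$ are i.i.d.\ for fixed $n$, measurable, and bounded, since $0<g_{\bm\sigma}\le 1$ and $v$ is bounded. Hence they have finite variance bounded by $\|v\|_\infty^2$. Chebyshev's inequality then gives
\[
\Pr\{|S_n-\mathbb E S_n|>\delta\}\le \|v\|_\infty^2/(N_n\delta^2)\to 0,
\]
where $\mathbb E S_n=\int_\Omega g_{\bm\sigma}(\bm y,\bm x)v(\bm x)\,d\mu_\Omega=:S(\bm y)$. Chebyshev is used rather than a strong LLN because the samples form a triangular array across $n$. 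The same argument gives $T_n\to T(\bm y)=\int_\Omega g_{\bm\sigma}(\bm y,\bm x')\,d\mu_\Omega(\bm x')$ in probability.

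The step needing care is the ratio, specifically a lower bound on the denominator. If $\Omega$ is bounded, as assumed in the Problem Setup, then with $R=\operatorname{diam}\Omega$ we have $g_{\bm\sigma}(\bm y,\bm x)\ge \exp(-R^2/\min_d\sigma_d^2)=:c>0$ for all $\bm x,\bm y\in\Omega$. Hence $T(\bm y)\ge c$ and $T_n(\bm y)\ge c$ deterministically. The map $(s,t)\mapsto s/t$ is Lipschitz on $\{t\ge c\}\times\{|s|\le\|v\|_\infty\}$, with
\[
|s/t-s'/t'|\le |s-s'|/c+\|v\|_\infty|t-t'|/c^2 .
\]
Convergence in probability therefore transfers directly, without invoking the continuous mapping theorem in general form. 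Multiplying by the fixed matrix $W_v$ preserves convergence, and $S(\bm y)W_v/T(\bm y)=\mathcal F(\bm y)$ by the definition of $\kappa_{\bm\sigma}$ in Equation~\eqref{eq:continuous_gaussian_kernel}.

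Finally, a union bound over the $N_t C$ entries, each of which deviates by more than $\varepsilon/\sqrt{N_tC}$ with probability tending to zero, gives $\Pr\{\|\operatorname{GauAtt}(z_n^s,X^n,Y)-\mathcal F|_Y\|>\varepsilon\}\to0$ in the Frobenius norm. By norm equivalence in finite dimensions, this holds in any norm. If $\Omega$ were unbounded, only $T(\bm y)>0$ would be available, and $T_n\ge T/2$ would hold only with probability tending to one. The Lipschitz argument would then be applied on that event.
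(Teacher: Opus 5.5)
Your proposal is correct and follows the same route as the paper. Both write the attention output at a fixed target point as the ratio $G_n(\bm y)/H_n(\bm y)$ of two empirical averages, apply the weak law of large numbers to numerator and denominator, pass to the ratio using $H(\bm y)>0$, and conclude from the finiteness of $Y$. Your additions (Chebyshev's inequality, the deterministic bound $g_{\bm\sigma}\ge \exp(-R^2/\min_d\sigma_d^2)$ on the bounded domain, and the explicit Lipschitz estimate) only make quantitative what the paper gets from the law of large numbers and the continuous mapping theorem.
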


\begin{proof}
Following the proof strategy used in PiT \cite{chen2024positional}, for a fixed target point $\bm y\in Y$, define
\begin{equation}
G_n(\bm y)
=
\frac{1}{N_n}
\sum_{i=1}^{N_n}
g_{\bm\sigma}(\bm y,\bm x^{n,i})
v(\bm x^{n,i})W_v,
\qquad
H_n(\bm y)
=
\frac{1}{N_n}
\sum_{i=1}^{N_n}
g_{\bm\sigma}(\bm y,\bm x^{n,i}),
\label{eq:proof_empirical_terms}
\end{equation}
where $g_{\bm\sigma}$ is the anisotropic Gaussian kernel defined in
Equation~\eqref{eq:gaussian_kernel}.
By the normalization in Equation~\eqref{eq:gaussian_weights} and the feature transfer in Equation~\eqref{gaussian_transfer}, the Gaussian attention output at $\bm y$ can be written as
\begin{equation}
\operatorname{GauAtt}(z_n^s,X^n,\bm y)
=
\frac{G_n(\bm y)}{H_n(\bm y)}.
\label{eq:proof_attention_ratio}
\end{equation}

Since $v$ is bounded and measurable, $W_v$ is fixed, and
$0<g_{\bm\sigma}(\bm y,\bm x)\le 1$, both
$g_{\bm\sigma}(\bm y,\cdot)v(\cdot)W_v$
and
$g_{\bm\sigma}(\bm y,\cdot)$
are integrable.
Therefore, by the law of large numbers,
\begin{equation}
	\begin{aligned}
		G_n(\bm y)
		&\xrightarrow{p}
		G(\bm y)
		:=
		\int_\Omega
		g_{\bm\sigma}(\bm y,\bm x)
		v(\bm x)W_v
		\,d\mu_\Omega(\bm x),
		\\
		H_n(\bm y)
		&\xrightarrow{p}
		H(\bm y)
		:=
		\int_\Omega
		g_{\bm\sigma}(\bm y,\bm x)
		\,d\mu_\Omega(\bm x).
	\end{aligned}
	\label{eq:proof_lln_limits}
\end{equation}
Since $g_{\bm\sigma}(\bm y,\bm x)>0$, we have $H(\bm y)>0$.
Applying the continuous mapping theorem to
Equation~\eqref{eq:proof_lln_limits} yields
\begin{equation}
	\frac{G_n(\bm y)}{H_n(\bm y)}
	\xrightarrow{p}
	\frac{G(\bm y)}{H(\bm y)}
	=
	\mathcal F(\bm y).
	\label{eq:proof_pointwise_limit}
\end{equation}

Since $Y$ contains finitely many target points, the pointwise convergence in
Equation~\eqref{eq:proof_pointwise_limit} implies convergence of the corresponding finite-dimensional output on $Y$.
Therefore, for any $\varepsilon>0$,
\begin{equation}
	\lim_{n\to+\infty}
	\Pr
	\left\{
	\left\|
	\operatorname{GauAtt}(z_n^s,X^n,Y)
	-
	\mathcal F|_Y
	\right\|
	\le \varepsilon
	\right\}
	=
	1,
\end{equation}
which proves the result.

\end{proof}

The result shows that the Gaussian attention can be viewed as a Monte Carlo approximation of a continuous Gaussian kernel integral operator.
When $P_{\mathrm{loc}}<1$, the corresponding integral operator is restricted to the receptive field $B_{r_{\bm y}}(\bm y)$, a ball centered at $\bm y$ with radius determined by the locality ratio:
\begin{equation}
\mathcal{F}_{\mathrm{loc}}(\bm y)
=
\int_{B_{r_{\bm y}}(\bm y)}
\kappa_{\bm{\sigma}}^{\mathrm{loc}}(\bm y,\bm x)
v(\bm x)W_v
d\mu_\Omega(\bm x),
\end{equation}
where
\begin{equation}
\kappa_{\bm{\sigma}}^{\mathrm{loc}}(\bm y,\bm x)
=
\frac{
g_{\bm{\sigma}}(\bm y,\bm x)
}{
\displaystyle
\int_{B_{r_{\bm y}}(\bm y)}
g_{\bm{\sigma}}(\bm y,\bm x')
d\mu_\Omega(\bm x')
}.
\end{equation}
The multi-head formulation applies the same construction to each head with head-specific Gaussian scales.
This continuous-operator interpretation provides theoretical motivation for applying Gaussian attention across different spatial resolutions, while the cross-resolution behavior of the complete HiLNO model is evaluated empirically in Section~\ref{OOD}.

\section{Experiments}
\label{experiments}
We conduct numerical experiments to evaluate HiLNO on PDE benchmarks with different discretizations, geometries, and problem scales.

\paragraph{Benchmarks}

As summarized in Table~\ref{data_benchmarks_lhno}, the experiments include two two-dimensional PDE benchmarks and a large-scale three-dimensional automotive aerodynamics task.
The Darcy and Airfoil benchmarks were introduced in FNO \cite{li2021fno} and Geo-FNO \cite{li2023fourier}, respectively, and have been widely adopted in subsequent studies. 
For the Shape-Net Car benchmark, derived from \cite{Umetani2018}, we follow the data preprocessing and evaluation settings used in Transolver \cite{wu2024Transolver}.

\begin{table}[h]
	\centering
	\caption{
		Summary of the PDE benchmarks used in the experiments. 
        \#Points denotes the number of discretization points per sample, and \#Dataset reports the numbers of training and test samples.
	}
	\label{data_benchmarks_lhno}
	\small
	\begin{tabular}{l c c c c}
		\toprule
		Benchmark & Data Structure & Dim. & \#Points & \#Dataset \\
		\midrule
		Darcy
		& Regular grid
		& 2D
		& 7,225
		& $(1000, 200)$ \\
		Airfoil
		& Structured mesh
		& 2D
		& 11,271
		& $(1000, 200)$ \\
		Shape-Net Car
		& Unstructured mesh
		& 3D
		& 32,186
		& $(789, 100)$ \\
		\bottomrule
	\end{tabular}
\end{table}

\paragraph{Baselines}
We compare HiLNO with representative neural operator baselines, including FNO~\cite{li2021fno}, Geo-FNO~\cite{li2023fourier}, GNOT~\cite{hao23c}, PiT~\cite{chen2024positional}, LNO~\cite{wang2024LNO}, Transolver++~\cite{luo2025transolver}, and SAOT~\cite{zhou2025dual}.
Moreover, LSM~\cite{wu2023lsm} and CALM-PDE~\cite{hagnberger2025calm} are included as representative hierarchical neural operators, while Transolver~\cite{wu2024Transolver} and LinearNO~\cite{hu2026linearNO} serve as representative recent attention-based neural operator baselines.
For the Shape-Net Car benchmark, PointNet~\cite{8099499} and GraphUNet~\cite{pmlr-v97-gao19a} are additionally included as geometric deep learning baselines.
Baseline results are taken from the corresponding experiments reported in the LinearNO, SAOT, and CALM-PDE papers, with HiLNO evaluated under the same problem settings.

\begin{table}[h]
	\centering
	\caption{
		Model hyperparameters and training configurations of HiLNO.
        Here, $K$ denotes the number of processor blocks and LR denotes the learning rate.
	}
	\label{config_lhno}
	\small
	\begin{tabular}{lcccccc}
		\toprule
		Benchmark
		& Epochs
		& Batch Size
		& Heads
		& Feature Dim. $C$
		& Processor $K$
		& LR \\
		\midrule
		Darcy      & 500 & 4 & 8  & 64  & 2 & \(1\times10^{-3}\) \\
		Airfoil    & 500 & 1 & 8  & 64  & 2 & \(4\times10^{-4}\) \\
		Shape-Net Car & 200 & 1 & 8 & 128 & 2 & \(1\times10^{-3}\) \\
		\bottomrule
	\end{tabular}
\end{table}

\paragraph{Implementations}

The HiLNO configurations used for each benchmark are summarized in Table~\ref{config_lhno}.
We use the AdamW optimizer \cite{loshchilov2017decoupled} with a warm-up stage followed by cosine learning-rate decay.  
Following common practice \cite{li2021fno}, the relative L2 error is used as the evaluation metric
\begin{equation}
	\mathrm{Relative}\ \text{L}2 \ \mathrm{Error}
	=
	\frac{
		\| u-\hat u \|_2
	}{
		\| u \|_2
	},
\end{equation}
where $u$ denotes the target solution, $\hat u$ denotes the model prediction, and $\| \cdot \|_2$ denotes the L2 norm.
During training, we use the relative L2 loss for the two-dimensional benchmarks~\cite{li2021fno} and the mean squared error for Shape-Net Car  \cite{wu2024Transolver}.
Unless otherwise specified, multi-scale supervision uses equal weights across all spatial levels, i.e., $\omega_\ell=1.0$ in Equation~\eqref{eq:ms_loss}.
For Gaussian attention, the locality ratio is set to $P_{\mathrm{loc}}=0.1$ in the encoder and $P_{\mathrm{loc}}=1.0$ in the latent processor and decoder.
All experiments are conducted on a single NVIDIA RTX 4090 GPU with 24 GB of memory.

\subsection{Main Results}

\begin{figure}[h]
	\centering
	\begin{minipage}[c]{0.55\linewidth}
		\centering
		\captionof{table}{
			Performance comparison on the Darcy benchmark.
            Bold numbers indicate the best results, while underlined numbers denote the second-best results.
            “*” indicates results reproduced by us
            using the hyperparameters reported in the original paper.
		}
		\label{darcy_results}
		\small
		\begin{tabular}{lc}
			\toprule
			Model & Relative L2 Error $\downarrow$ \\
			\midrule
			FNO \citeyearpar{li2021fno}
			& 0.0108 \\
			Geo-FNO \citeyearpar{li2023fourier}
			& 0.0108 \\
			\midrule
			LSM \citeyearpar{wu2023lsm} 
			& 0.0065 \\
			PiT* \citeyearpar{chen2024positional}
			& 0.0058 \\
			Transolver* \citeyearpar{wu2024Transolver}
			& 0.0053 \\
			LNO \citeyearpar{wang2024LNO}
			& 0.0063 \\
			Transolver++ \citeyearpar{luo2025transolver}
			& 0.0056 \\
			SAOT \citeyearpar{zhou2025dual}
			& \underline{0.0049} \\
			LinearNO \citeyearpar{hu2026linearNO}
			& 0.0050 \\
			\midrule
			\textbf{HiLNO}
			& \textbf{0.0037} \\
			\bottomrule
		\end{tabular}
	\end{minipage}
	\hfill
	\begin{minipage}[c]{0.40\linewidth}
		\centering
		\includegraphics[width=\linewidth]{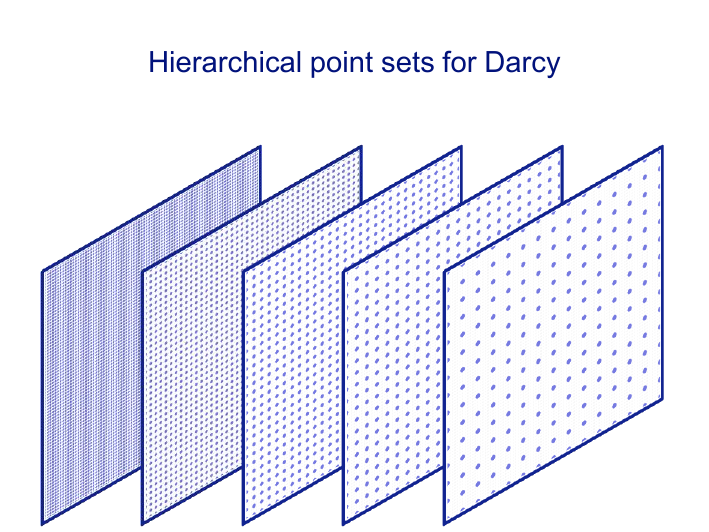}
		\caption{
			Visualization of the hierarchical point sets used for the Darcy benchmark.
		}
		\label{darcy_hierarchy}
	\end{minipage}
	
\end{figure}

\subsubsection{Darcy}
The Darcy benchmark evaluates operator learning on a regular grid. It models steady-state flow through a porous medium and is governed by
\begin{equation}
	\begin{aligned}
		-\nabla\cdot(a(\bm{x})\nabla u(\bm{x})) &= f(\bm{x}), \qquad \bm{x}\in(0,1)^2,\\
		u(\bm{x})&=0, \qquad \bm{x}\in\partial(0,1)^2,
	\end{aligned}
	\label{eq:darcy}
\end{equation}
where $a(\bm{x})$ is the diffusion coefficient and $u(\bm{x})$ is the solution \cite{li2021fno}. The model takes $a$ as input and predicts $u$.
Following \cite{wu2024Transolver}, each sample is discretized on an $85\times85$ grid, with $1000$ samples used for training and $200$ for testing.

For this benchmark, HiLNO constructs the hierarchical point sets as
\begin{equation}
	X^0\ (7,225)
	\rightarrow
	X^1\ (1,849)
	\rightarrow
	X^2\ (841)
	\rightarrow
	X^3\ (484)
	\rightarrow
	X^4\ (225),
	\label{eq:darcy_hierarchy}
\end{equation}
corresponding to regular grids of resolutions
$85^2$, $43^2$, $29^2$, $22^2$, and $15^2$, respectively.
The intermediate point sets are obtained by uniformly subsampling the original grid $X^0$ with stride $r$ along each spatial axis, with the resulting number of points given by
\begin{equation}
n_r=\frac{85-1}{r}+1,
\qquad
r\in\{2,3,4,6\}.
\end{equation}
The resulting hierarchy is visualized in Figure~\ref{darcy_hierarchy}.

As shown in Table~\ref{darcy_results}, HiLNO achieves a relative L2 error of $0.0037$, corresponding to a $24.5\%$ reduction in error compared with the second-best baseline, SAOT.
This result demonstrates the strong predictive performance of HiLNO on regular-grid PDE problems.
Figure~\ref{plot_darcy} further compares HiLNO with Transolver, a strong recent neural operator baseline, in terms of predicted solution fields and pointwise errors.
HiLNO exhibits visibly smaller errors over most of the domain, particularly in regions with pronounced spatial variations, indicating improved reconstruction of local solution structures.

\begin{figure}[htbp]
	\centering
	\includegraphics[width=0.95\linewidth]{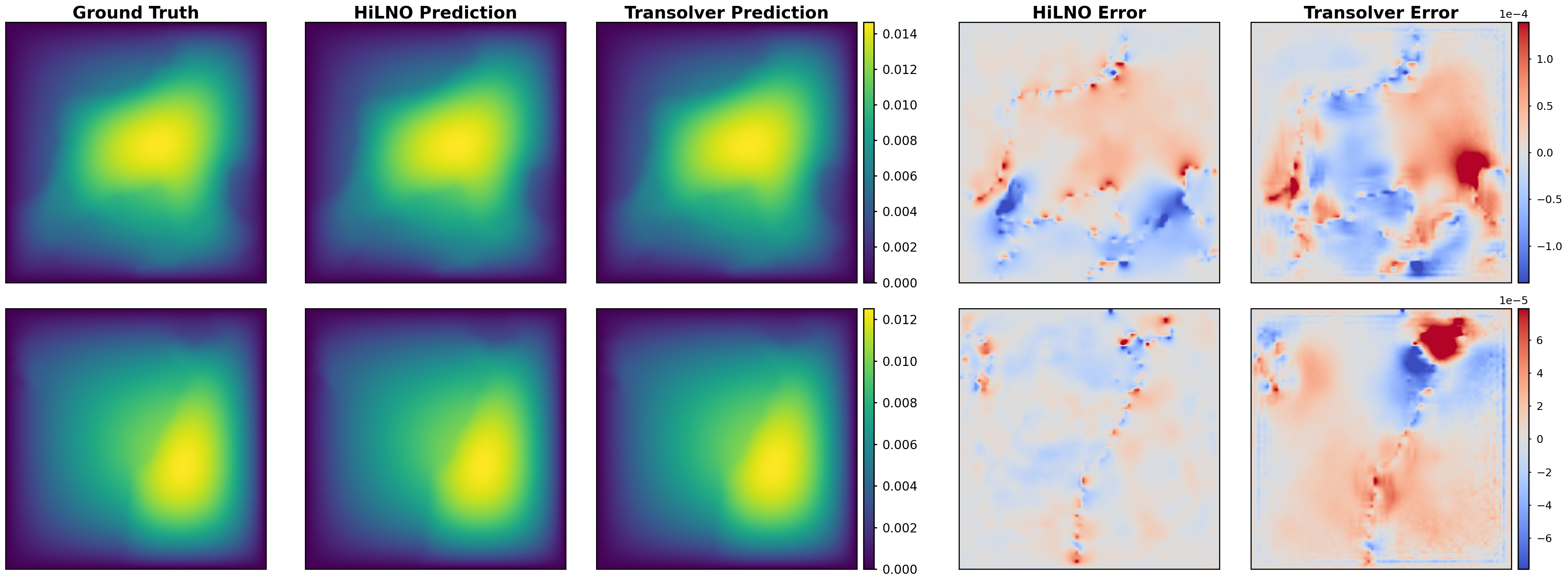}
	\caption{
		Visualized comparisons on the Darcy benchmark. 
        Each row represents one test sample with a different coefficient field, showing the ground truth, model predictions, and corresponding errors.
	}
	\label{plot_darcy}
\end{figure}

\begin{figure}[h]
	\centering
	\begin{minipage}[c]{0.55\linewidth}
		\centering
		\captionof{table}{
			Performance comparison on the Airfoil benchmark.
		}
		\label{airfoil_results}
		\small
		\begin{tabular}{lc}
			\toprule
			Model & Relative L2 Error $\downarrow$ \\
			\midrule
			Geo-FNO \citeyearpar{li2023fourier}
			& 0.0138 \\
			\midrule
			LSM \citeyearpar{wu2023lsm}
			& 0.0059 \\
			Transolver* \citeyearpar{wu2024Transolver}
			& 0.0050 \\
			LNO \citeyearpar{wang2024LNO}
			& 0.0053 \\
			CALM-PDE \citeyearpar{hagnberger2025calm} 
			& 0.0058 \\
			Transolver++ \citeyearpar{luo2025transolver}
			& 0.0051 \\
			SAOT \citeyearpar{zhou2025dual}
			& \underline{0.0048} \\
			LinearNO \citeyearpar{hu2026linearNO}
			& 0.0049 \\
			\midrule
			\textbf{HiLNO}
			& \textbf{0.0046} \\
			\bottomrule
		\end{tabular}
	\end{minipage}
	\hfill
	\begin{minipage}[c]{0.40\linewidth}
		\centering
		\includegraphics[width=\linewidth]{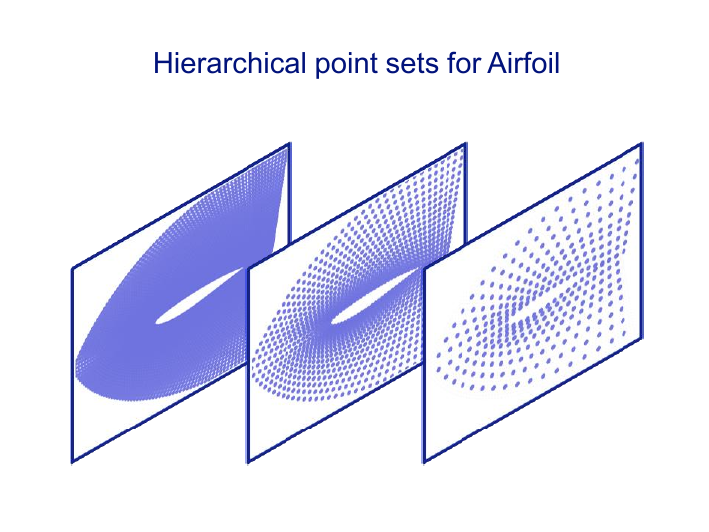}
		\caption{
			Visualization of the hierarchical point sets used for the Airfoil benchmark.
		}
		\label{airfoil_hierarchy}
	\end{minipage}
	
\end{figure}

\begin{figure}[h]
	\centering
	\includegraphics[width=0.95\linewidth]{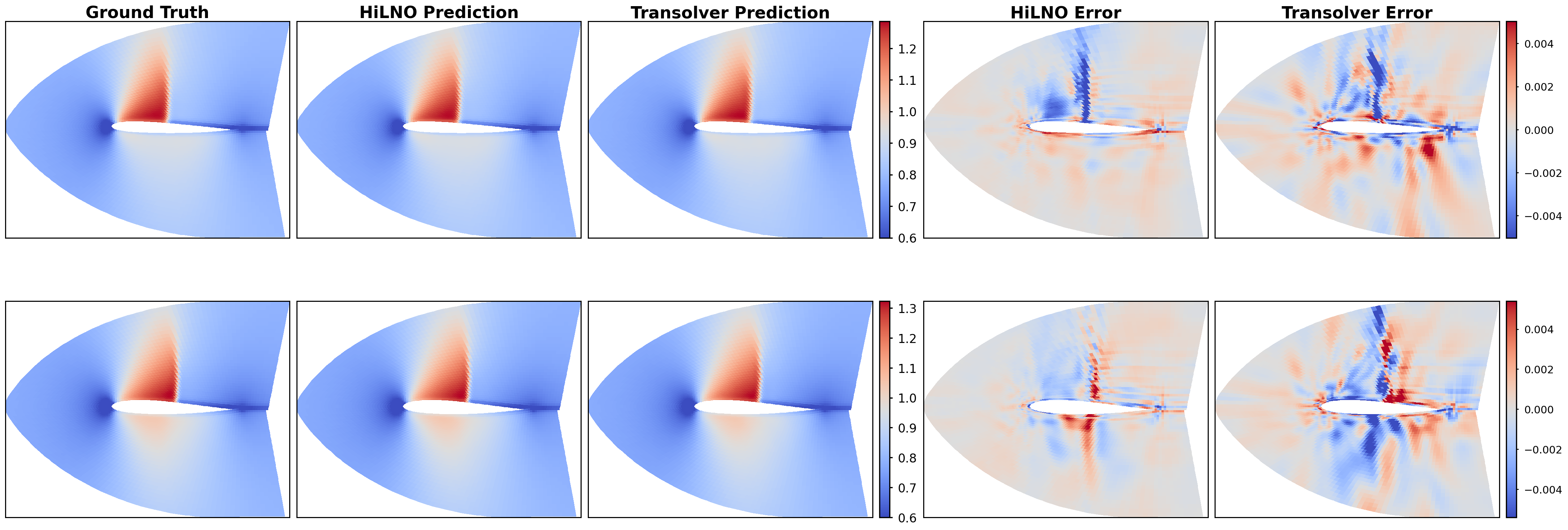}
	\caption{
		Visualized comparisons on the Airfoil benchmark.
        Each row represents one test sample with a different airfoil design, showing the ground truth, model predictions, and corresponding errors.
	}
	\label{plot_airfoil}
\end{figure}

\subsubsection{Airfoil}

The Airfoil benchmark considers operator learning on a structured mesh with complex geometry. 
The task is to predict the Mach number distribution around an airfoil from its geometric representation. 
The governing equations are given by the Euler equations as follows
\begin{equation}
	\frac{\partial \rho^f}{\partial t}
	+ \nabla \cdot \left( \rho^f \boldsymbol{v} \right)
	= 0,
	\quad
	\frac{\partial \rho^f \boldsymbol{v}}{\partial t}
	+ \nabla \cdot
	\left(
	\rho^f \boldsymbol{v} \otimes \boldsymbol{v}
	+ p \boldsymbol{I}
	\right)
	= 0,
	\quad
	\frac{\partial E}{\partial t}
	+ \nabla \cdot
	\left(
	(E+p)\boldsymbol{v}
	\right)
	= 0,
\end{equation}
where \(\rho^f\) is the fluid density, \(\boldsymbol{v}\) is the velocity vector, \(p\) is the pressure, and \(E\) is the total energy \cite{li2023fourier}.
Each sample is discretized on a structured mesh of size $221\times 51$, corresponding to $11,271$ mesh points. Following \cite{wu2024Transolver}, $1000$ samples with different airfoil designs are used for training, and the remaining $200$ samples are used for testing.

For this benchmark, HiLNO constructs the hierarchical point sets as
\begin{equation}
	X^0\ (11,271)
	\rightarrow
	X^1\ (2,886)
	\rightarrow
	X^2\ (784),
	\label{eq:airfoil_hierarchy}
\end{equation}
corresponding to structured meshes of sizes
$221\times51$, $111\times26$, and $56\times14$, respectively.
The spatial supports \(X^1\) and \(X^2\) are sampled directly from the original mesh \(X^0\) using geometry-preserving uniform subsampling.
This construction retains the boundary points while progressively reducing the spatial resolution.
The resulting hierarchy is visualized in Figure~\ref{airfoil_hierarchy}.

As shown in Table~\ref{airfoil_results}, HiLNO achieves a relative L2 error of $0.0046$, outperforming all baseline models on the Airfoil benchmark.
Figure~\ref{plot_airfoil} compares the predicted Mach number fields and pointwise errors of HiLNO and Transolver.
HiLNO produces smaller and more localized errors, especially in regions with sharp spatial variations, such as near the airfoil boundary and shock regions.
This result indicates that HiLNO improves the reconstruction of local flow variations on structured meshes.

\begin{table}[h]
	\centering
	\caption{
		Performance comparison on the Shape-Net Car benchmark. 
        $\downarrow$ indicates lower is better, while $\uparrow$ indicates higher is better.
	}
	\label{car_lhno}
	\small
	\begin{tabular}{lcccc}
		\toprule
		\multirow{2}{*}{Model}
		& \multicolumn{4}{c}{Shape-Net Car} \\
		\cmidrule(lr){2-5}
		& Velocity  $\downarrow$ & Pressure $\downarrow$ & $C_D \downarrow$ & $\rho_D \uparrow$ \\
		\midrule
		PointNet \citeyearpar{8099499}        & 0.0494 & 0.1104 &0.0298 & 0.9583 \\
		GraphUNet \citeyearpar{pmlr-v97-gao19a} &0.0471 &0.1102 &0.0226 &0.9725 \\
		Geo-FNO \citeyearpar{li2023fourier}      & 0.1670 & 0.2378 &0.0664 &0.8280  \\
		\midrule
		GNOT \citeyearpar{hao23c}      & 0.0329 & 0.0798 &0.0178 & 0.9833 \\
		LNO \citeyearpar{wang2024LNO}        & 0.0269 & 0.0870 &0.0174 & 0.9781 \\
		Transolver* \citeyearpar{wu2024Transolver} & 0.0221 & 0.0788 & \textbf{0.0127} & \textbf{0.9906}  \\
		LinearNO* \citeyearpar{hu2026linearNO}      & \textbf{0.0198} & \underline{0.0766} & 0.0140 &0.9881  \\
		\midrule
		HiLNO       & \underline{0.0220} & \textbf{0.0753} & \underline{0.0135} & \underline{0.9893}   \\
		\bottomrule
	\end{tabular}
\end{table}

\subsubsection{Shape-Net Car}

Finally, we consider the large-scale Shape-Net Car benchmark to evaluate HiLNO on complex three-dimensional vehicle geometries.
The task is to predict the surface pressure and surrounding flow velocity from the vehicle geometry. 
The underlying physics is governed by the three-dimensional Reynolds-averaged Navier--Stokes equations
\begin{equation}
	\begin{aligned}
		\nabla \cdot \bar{\boldsymbol{u}} &= 0,  \\
		\left( \bar{\boldsymbol{u}} \cdot \nabla \right)
		\bar{\boldsymbol{u}}
		+ \nu \nabla^2 \bar{\boldsymbol{u}}
		+ \frac{1}{\rho} \nabla \bar{p}
		+ \nabla \cdot \boldsymbol{R}
		&= 0,
	\end{aligned}
\end{equation}
where $\bar{\boldsymbol{u}}$ denotes the time-averaged velocity, $\bar{p}$ is the time-averaged pressure, $\rho$ and $\nu$ are the constant fluid density and kinematic viscosity, respectively, and $\boldsymbol{R}$ is the Reynolds stress tensor characterizing the additional momentum transport induced by turbulent fluctuations \cite{Umetani2018}. 
Each sample contains 32,186 unstructured points, including approximately 3,700 points on the vehicle surface.
Following \cite{wu2024Transolver}, 789 samples are used for training and 100 samples for testing.

For this benchmark, HiLNO constructs the hierarchical point sets as
\begin{equation}
	X^0\ (32,186)
	\rightarrow
	X^1\ (6,144)
	\rightarrow
	X^2\ (3,072).
	\label{eq:car_hierarchy}
\end{equation}
The latent point sets $X^1$ and $X^2$ are sampled directly from the original point set $X^0$ using stratified random sampling.
Specifically, $X^1$ contains 2,048 surface points and 4,096 surrounding flow points, while $X^2$ contains 1,024 surface points and 2,048 surrounding flow points.
This sampling strategy allocates a sufficient number of latent points to the vehicle surface, where the pressure field is predicted.
The weights of the multi-scale supervision in Equation~\eqref{eq:ms_loss} are set to
$\omega_0=1.0$, $\omega_1=0.5$, and $\omega_2=0.5$.
We additionally report the relative L2 error of the drag coefficient $C_D$ and the Spearman correlation coefficient $\rho_D$ between the predicted and reference drag coefficients, following \cite{wu2024Transolver}.

\begin{figure}[ht]
	\centering
	\includegraphics[width=0.9\linewidth]{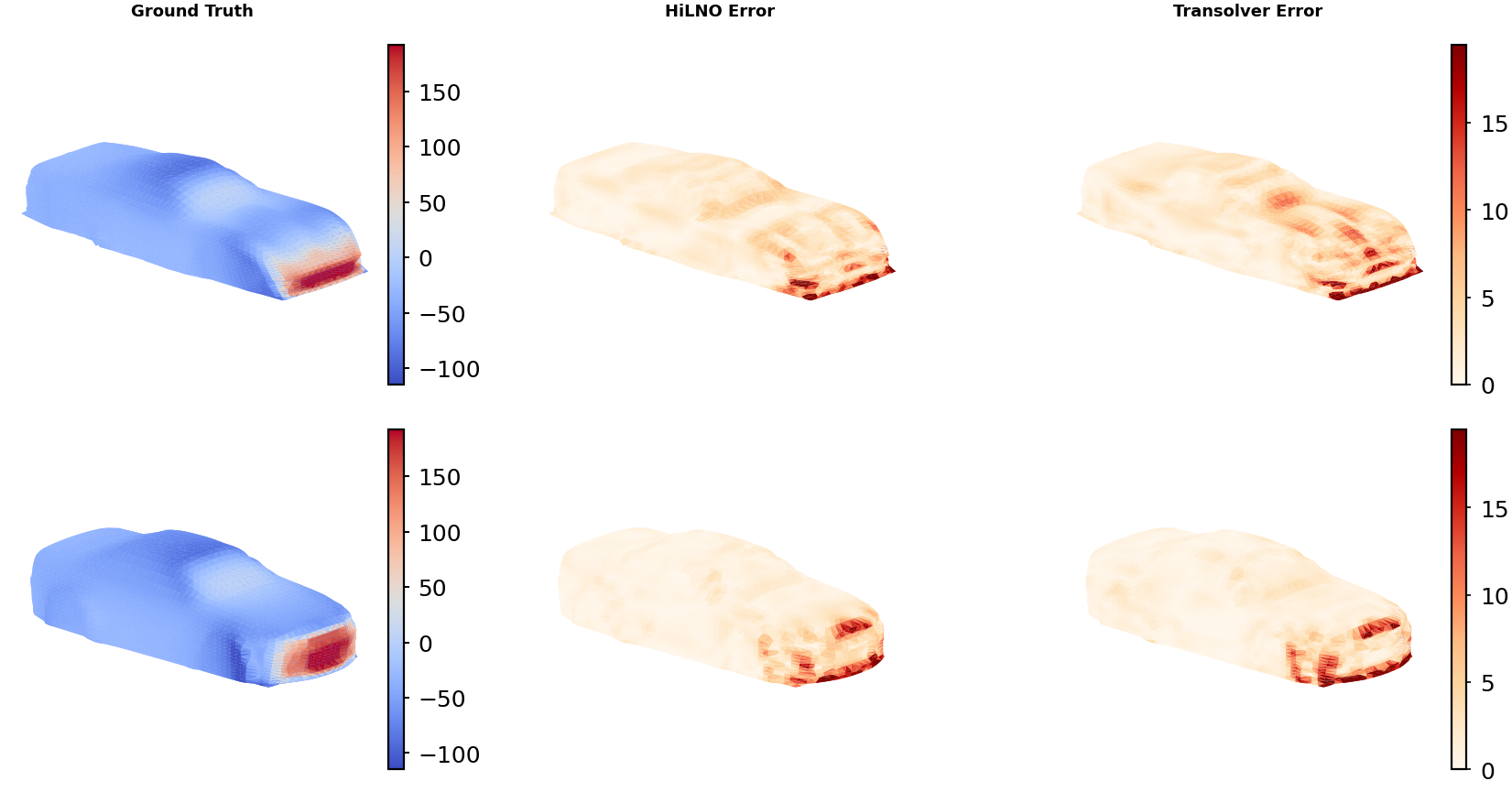}
	\caption{
		Visualization of surface pressure and pointwise prediction errors on representative test samples, with each row corresponding to a different vehicle geometry.
	}
	\label{plot_car}
\end{figure}

As shown in Table~\ref{car_lhno}, HiLNO achieves competitive performance across all four evaluation metrics. In particular, it obtains the lowest pressure error of $0.0753$, while achieving the second-best results for the velocity error, drag-coefficient error $C_D$, and Spearman correlation coefficient $\rho_D$. These results indicate that HiLNO maintains accurate surface-pressure prediction while remaining competitive in flow-velocity and aerodynamic quantities.
Figure~\ref{plot_car} further visualizes the pointwise pressure-prediction errors of HiLNO and Transolver on representative test samples. HiLNO produces relatively small errors over most of the vehicle surface, demonstrating its ability to reconstruct the surface pressure distribution on complex three-dimensional geometries.

\subsection{Computational Efficiency}
\label{efficiency}
 
We compare the parameter count and computational cost of HiLNO with Transolver and LinearNO on three benchmarks with relatively large spatial discretizations, as reported in Table~\ref{efficiency_lhno}.
Both Transolver and LinearNO are competitive recent neural operator baselines.
HiLNO consistently requires substantially fewer parameters and lower computational cost across all three benchmarks.
Compared with LinearNO, HiLNO reduces the parameter count and FLOPs by $84.4\%$ and $69.2\%$ on average across the three benchmarks, respectively.
In particular, HiLNO uses fewer than one million parameters for all three tasks.

The computational efficiency of HiLNO mainly benefits from its hierarchical latent computation and lightweight Gaussian attention.
The hierarchical architecture progressively reduces the number of spatial points and performs the main latent processing on compact representations.
Meanwhile, Gaussian attention constructs its attention weights directly from relative positions, avoiding feature-dependent query--key similarity computation and requiring only a small number of learnable Gaussian scale parameters.
Together, these designs lead to a favorable trade-off between predictive accuracy, parameter count, and computational cost.

\begin{table}[h]
	\centering
	\caption{
		Efficiency comparison of Transolver, LinearNO, and HiLNO.
		Params and FLOPs denote the number of model parameters and floating-point operations, respectively.
	}
	\label{efficiency_lhno}
	\begin{tabular}{ccccc}
		\toprule
		Metric & Model & Darcy & Airfoil & Shape-Net Car \\
		\midrule
		\multirow{3}{*}{Params (M)}
		& Transolver \citeyearpar{wu2024Transolver} & 2.83 & 2.81 & 3.86 \\
		& LinearNO \citeyearpar{hu2026linearNO}     & 1.77 & 1.77 & 3.85 \\
		& HiLNO                                    & \textbf{0.30} & \textbf{0.179} & \textbf{0.76} \\
		\midrule
		\multirow{3}{*}{FLOPs (G)}
		& Transolver \citeyearpar{wu2024Transolver} & 20.87 & 32.38 & 266.21 \\
		& LinearNO \citeyearpar{hu2026linearNO}     & 13.68 & 21.34 & 257.80 \\
		& HiLNO                                    & \textbf{5.39} & \textbf{3.49} & \textbf{94.53} \\
		\bottomrule
	\end{tabular}
\end{table}

\subsection{Understanding the HiLNO}

\subsubsection{Effect of the Hierarchical Latent Space}

\begin{table}[h]
	\centering
	\caption{
		Effect of the hierarchical latent space on the Darcy benchmark. All variants use the same finest and coarsest point sets, with multi-scale supervision applied at the corresponding intermediate levels. Only the encoder-side hierarchy is shown, while the decoder follows the corresponding symmetric structure.
	}
	\small
	\label{progressive_compression}
	\begin{tabular}{l c c c c}
		\toprule
		\multirow{2}{*}{Model} 
		& \multirow{2}{*}{Hierarchy}
		& Params 
		& FLOPs 
		& Relative L2 \\
		& 
		& (M) 
		& (G) 
		& Error \\
		\midrule
		Hierarchy-2 
		& $85^2 \to 15^2$
		& 0.12 & 1.21 & 0.00555 \\
		Hierarchy-2+
		& $85^2\to 15^2$
		& 0.34 & 2.91 &0.00574 \\
		Hierarchy-3
		& $85^2\to 43^2\to 15^2$
		& 0.18 & 4.76 & 0.00396 \\
		Hierarchy-4
		& $85^2\to 43^2\to 29^2\to 15^2$
		& 0.24 & 5.23 & 0.00377 \\
		Hierarchy-5
		& $85^2\to 43^2\to 29^2\to 22^2\to 15^2$
		& 0.30 & 5.39 & 0.00372 \\
		\bottomrule
	\end{tabular}
\end{table}

To examine the effect of the hierarchical latent space, we compare different hierarchical configurations on the Darcy benchmark with multi-scale supervision applied at the corresponding intermediate levels, while keeping the finest and coarsest point sets fixed at $85^2$ and $15^2$, respectively.
Hierarchy-2 denotes a direct mapping from the fine representation to the coarsest latent point set, whereas Hierarchy-3, Hierarchy-4, and Hierarchy-5 denote progressively deeper hierarchies with additional intermediate resolutions.
To assess the effect of parameter count, we construct Hierarchy-2+, which retains the two-level structure but increases the feature dimension to match the parameter count of Hierarchy-5.

As shown in Table~\ref{progressive_compression}, direct compression in Hierarchy-2 results in a relative L2 error of $0.00555$, while introducing intermediate latent levels consistently reduces the error to $0.00396$, $0.00377$, and $0.00372$ for Hierarchy-3, Hierarchy-4, and Hierarchy-5, corresponding to reductions of $28.6\%$, $32.1\%$, and $33.0\%$, respectively. 
In contrast, Hierarchy-2+ achieves an error of $0.00574$ despite having more parameters than Hierarchy-5, indicating that the improvement is not simply due to increased parameter count.
These results support the use of progressive hierarchical compression for improving latent representation learning. 
The smaller gains from Hierarchy-4 to Hierarchy-5 also suggest a diminishing return as additional latent levels are introduced, reflecting a trade-off between predictive accuracy and computational cost.

\begin{figure}[h]
	\centering
	\includegraphics[width=0.6\linewidth]{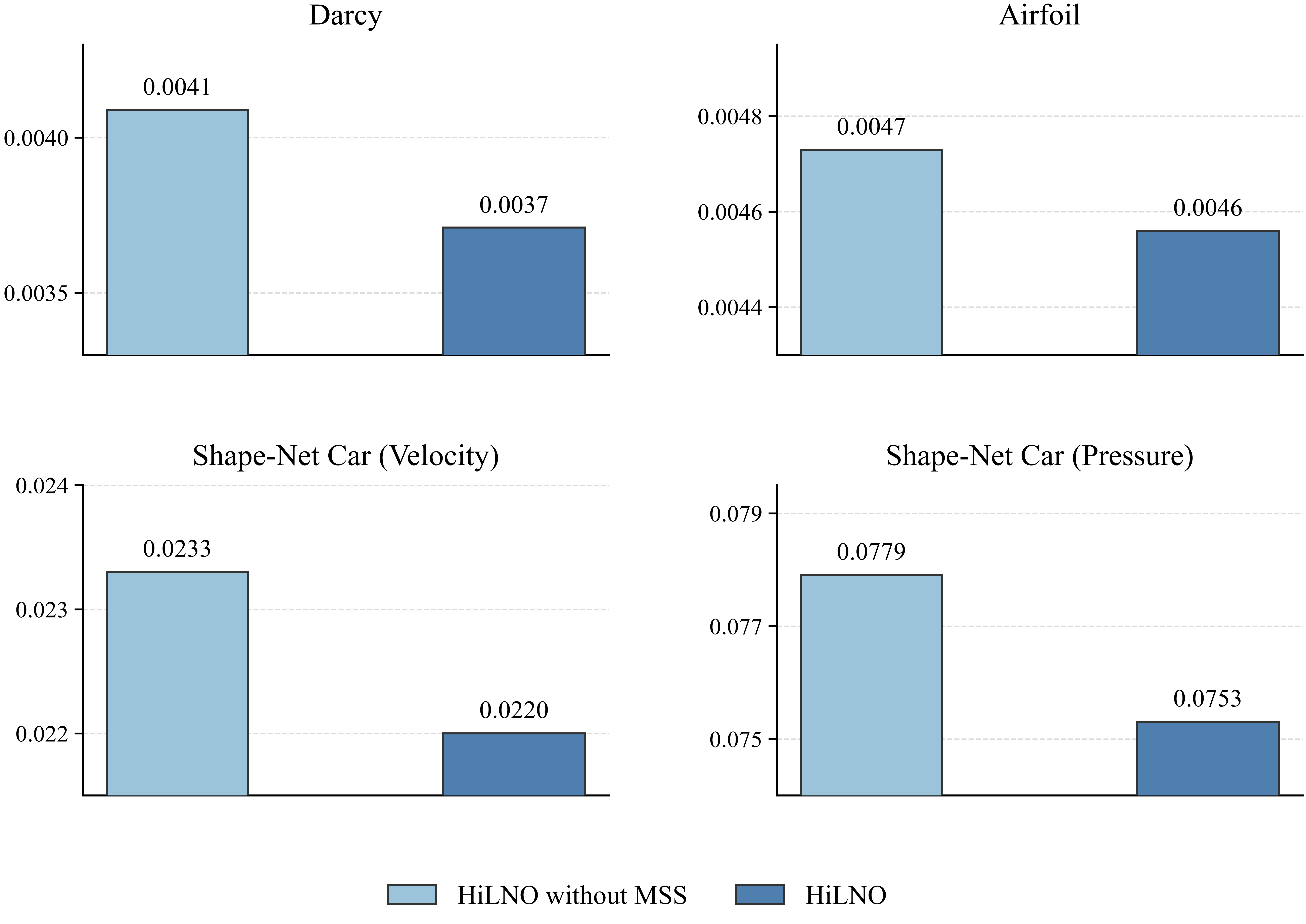}
	\caption{
		Ablation study of multi-scale supervision on Darcy, Airfoil, and Shape-Net Car.
        All bars report relative L2 errors; lower values indicate better performance.
	}
	\label{ms_bar}
\end{figure}

\begin{figure}[!htbp]
	\centering
	\includegraphics[width=0.75\linewidth]{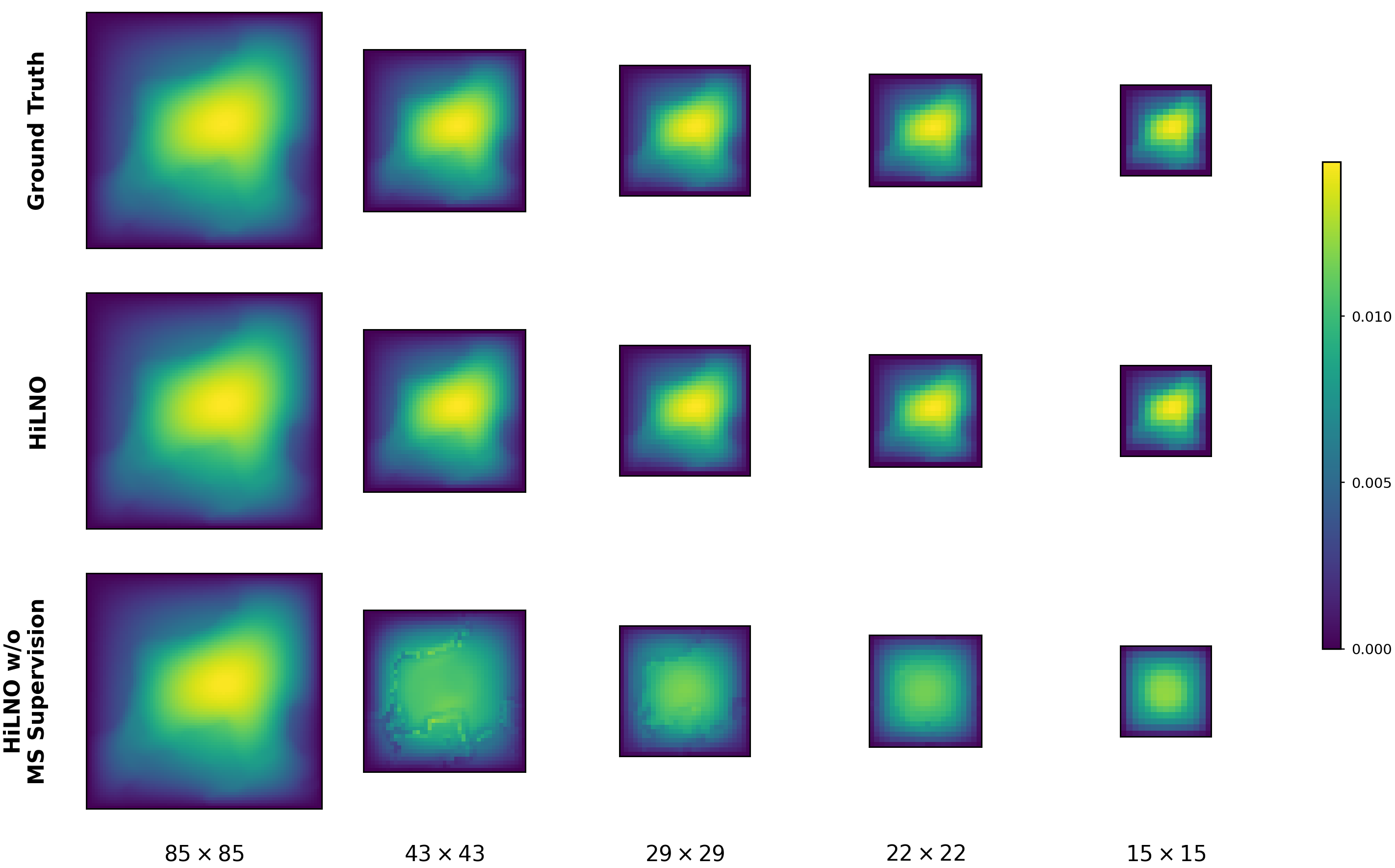}
	\caption{
		Visualization of intermediate decoded predictions with and without (w/o) multi-scale supervision on the Darcy benchmark.
		Columns correspond to spatial resolutions of $85^2$, $43^2$, $29^2$, $22^2$, and $15^2$, respectively.
		The rows show the ground-truth solutions, predictions of HiLNO, and predictions of HiLNO without multi-scale supervision.
	}
	\label{ms_prediction}
\end{figure}

\subsubsection{Effect of Multi-Scale Supervision}

We next examine the contribution of multi-scale supervision by retaining only the loss at the finest discretization in Equation~\eqref{eq:ms_loss}, while keeping the hierarchical latent space and all other training settings unchanged.
The quantitative comparison is shown in Figure~\ref{ms_bar}. Removing multi-scale supervision consistently degrades the prediction accuracy on Darcy, Airfoil, and Shape-Net Car. 
In particular, the relative L2 error on Darcy increases from $0.0037$ to $0.0041$ when multi-scale supervision is removed.
These results indicate that explicitly supervising intermediate decoded levels improves the effectiveness of the hierarchical latent representations.

More importantly, multi-scale supervision improves the ability of intermediate decoded representations to make accurate predictions at their corresponding resolutions. Figure~\ref{ms_prediction} shows the predictions at different resolutions on the Darcy benchmark. Without multi-scale supervision, coarse-level predictions are noticeably blurrier and fail to recover the main solution structures, whereas supervised predictions remain more consistent with the ground truth across resolutions. 
This suggests that multi-scale supervision guides intermediate representations to preserve solution-relevant information across multiple spatial scales, rather than relying solely on the final-resolution objective.

\subsubsection{Analysis of Anisotropic Gaussian Attention}
\label{vis_gauatt}

We first examine the effect of the locality ratio in anisotropic Gaussian attention on predictive performance. As shown in Figure~\ref{locality_ablation}, global attention gives the lowest error, while smaller encoder locality causes only a minor accuracy drop and reduces computation by involving fewer source points. Therefore, in the main experiments, we set $P_{\mathrm{loc}}^{\mathrm{en}}=0.1$ and $P_{\mathrm{loc}}^{\mathrm{de}}=1.0$ to balance predictive accuracy and computational efficiency.

\begin{figure}[h]
	\centering
	\includegraphics[width=0.4\linewidth]{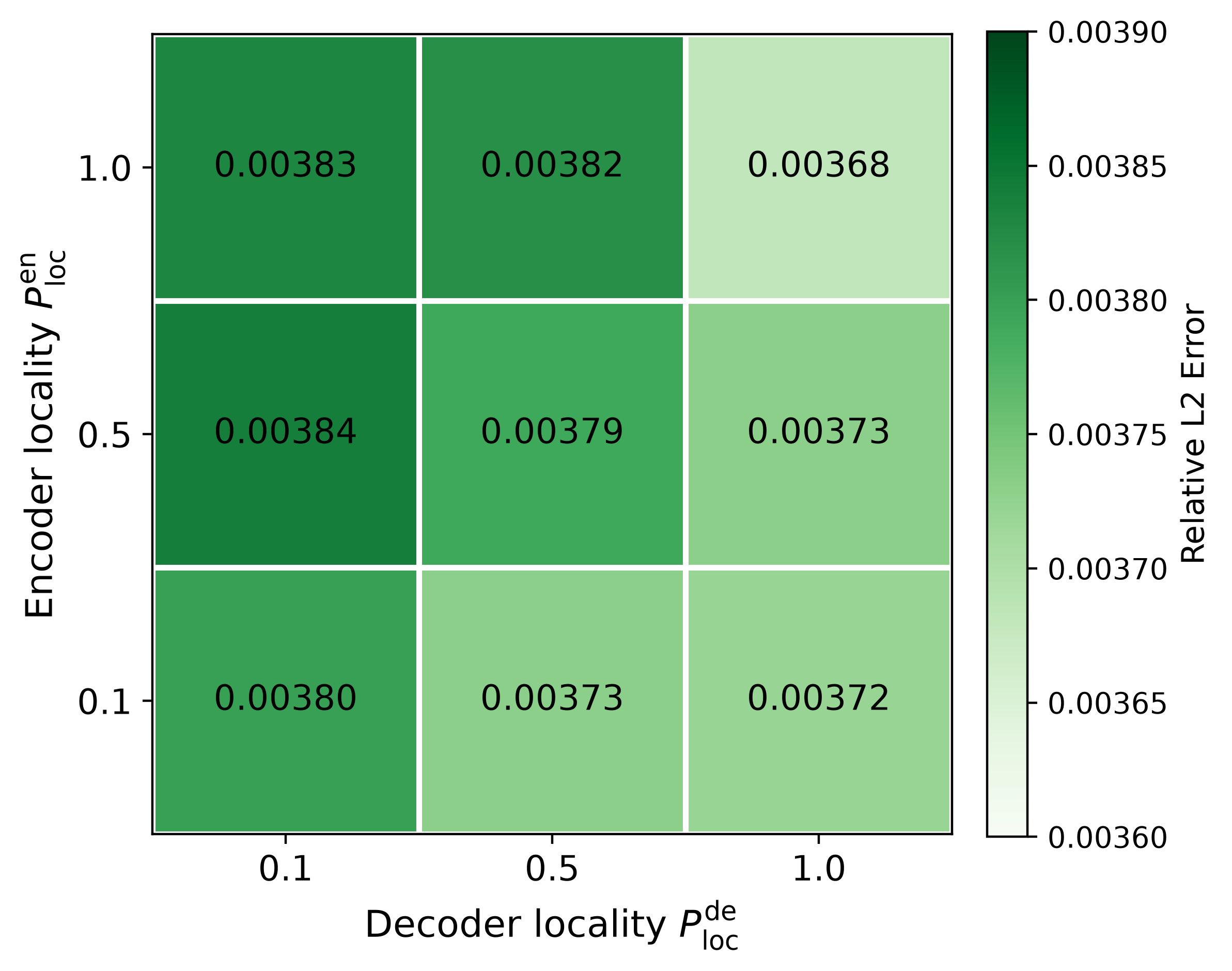}
	\caption{
    Effect of encoder and decoder locality ratios on the Darcy benchmark.
    Each cell reports the relative L2 error for the corresponding
$(P_{\mathrm{loc}}^{\mathrm{en}},P_{\mathrm{loc}}^{\mathrm{de}})$ setting.
	}
	\label{locality_ablation}
\end{figure}

\begin{figure}[h]
	\centering
	\includegraphics[width=0.75\linewidth]{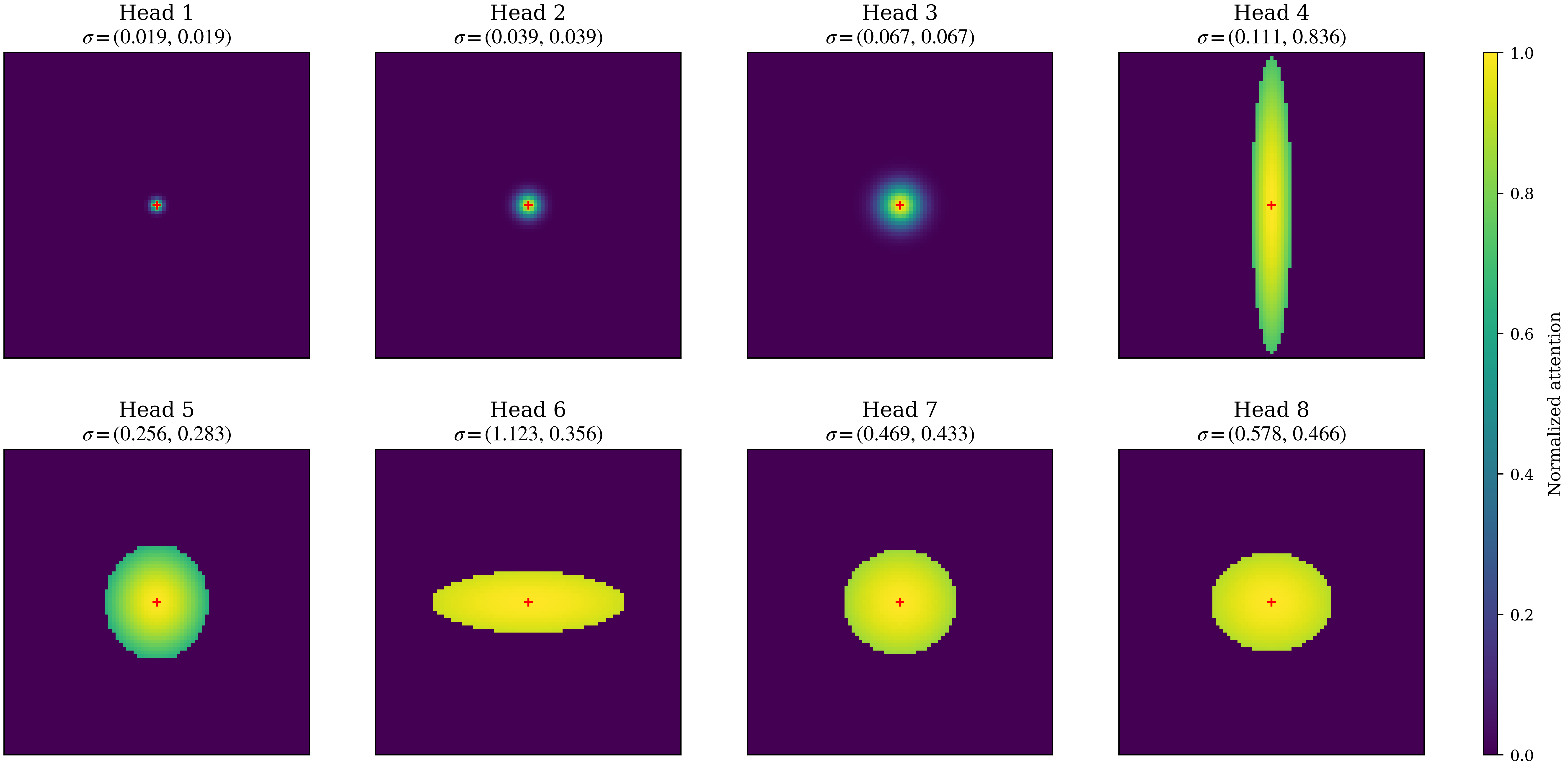}
	\caption{
		Normalized Gaussian attention weights of eight heads in the first encoder block.
		The red marker denotes the target point, with $P_{\mathrm{loc}}^{\mathrm{en}}=0.1$ restricting attention to a local subset of source points.	
	}
	\label{gauatt}
\end{figure}

We further qualitatively examine the spatial patterns learned by anisotropic Gaussian attention.
Figure~\ref{gauatt} visualizes the normalized attention weights of eight heads in the first encoder block on the Darcy benchmark, where a representative target point is marked in red and the learned Gaussian scales are reported above each head.
The heads exhibit distinct spatial interaction patterns, ranging from approximately isotropic distributions to pronounced directional elongation, demonstrating the flexibility of multi-head Gaussian attention in capturing diverse spatial interactions.

\subsection{Generalization Across Spatial Resolutions}
\label{OOD}

Motivated by the continuous-operator interpretation of Gaussian attention established in Theorem~\ref{thm:gaussian_attention_limit}, we further investigate the cross-resolution generalization of HiLNO.
All models are trained exclusively on the $85^2$ grid and directly evaluated on $106^2$, $141^2$, and $211^2$ grids without retraining or fine-tuning. This setting examines whether a model trained on a fixed discretization can be directly applied to unseen spatial resolutions.

As shown in Figure~\ref{darcy_ood}, the prediction errors of all considered methods increase as the test grid becomes finer. Nevertheless, HiLNO consistently achieves the lowest relative L2 error at all evaluated resolutions, including those unseen during training. These results demonstrate that HiLNO retains its predictive advantage when generalized across spatial resolutions.

\begin{figure}[h]
	\centering
	\includegraphics[width=0.6\linewidth]{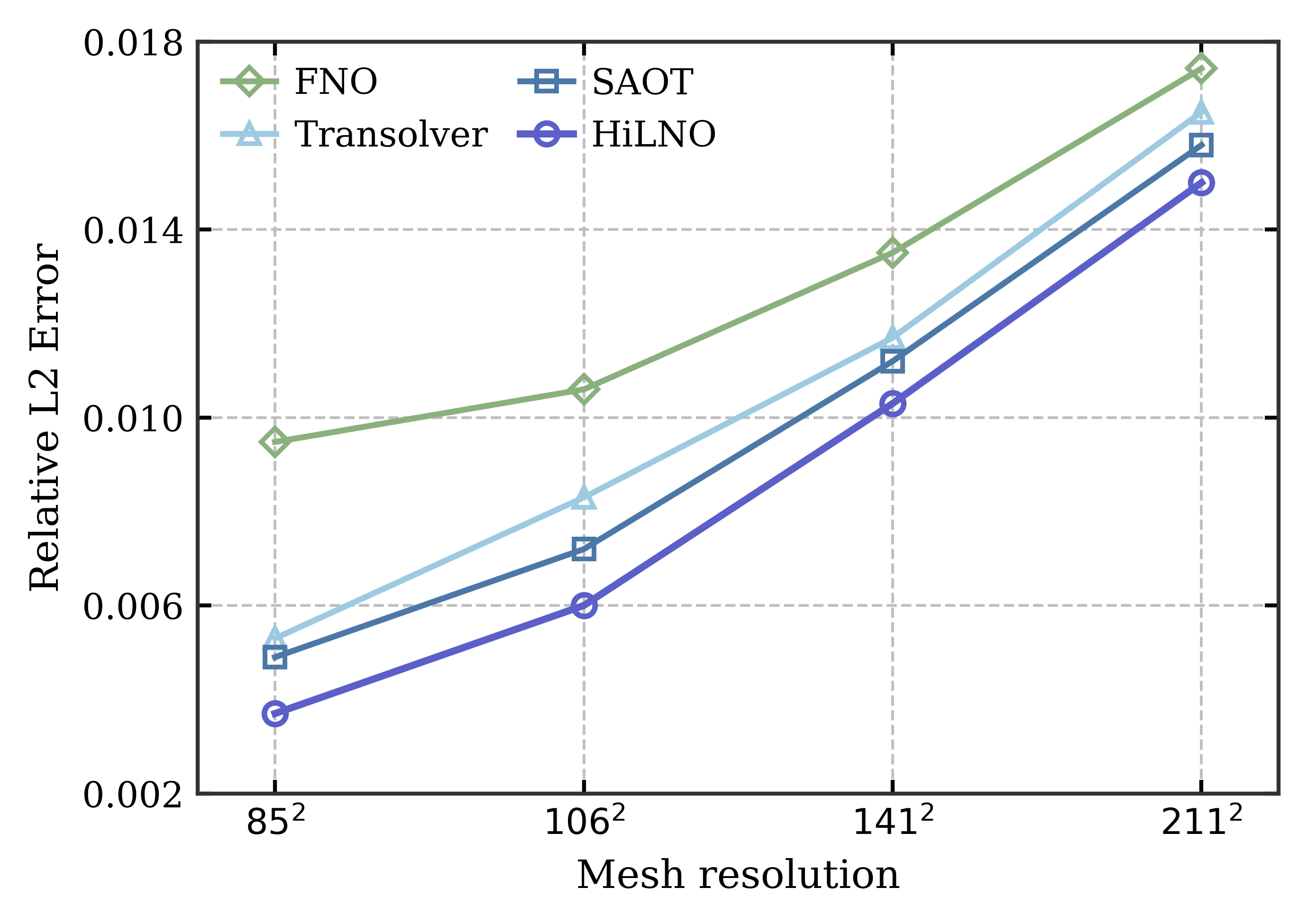}
	\caption{
		Generalization across spatial resolutions on the Darcy benchmark. All methods are trained on the $85^2$ grid and directly evaluated at different resolutions without retraining or fine-tuning.
	}
	\label{darcy_ood}
\end{figure}

\section{Conclusion}
\label{conclusion}
This paper presented HiLNO, a hierarchical latent neural operator for efficient PDE operator learning that constructs a fine-to-coarse-to-fine hierarchical latent space. Within this hierarchical latent space, multi-scale supervision directly guides intermediate predictions using target fields at the corresponding spatial resolutions, encouraging solution-relevant structures to be captured across multiple spatial scales. Anisotropic Gaussian attention enables feature transfer across different levels through learnable direction-dependent Gaussian kernels, making HiLNO applicable to general geometries.

Experiments on Darcy, Airfoil, and Shape-Net Car showed that HiLNO achieves competitive predictive accuracy while requiring substantially fewer parameters and lower computational cost than representative efficient neural operator baselines.
Ablation studies showed that introducing intermediate hierarchical levels improves predictive accuracy, while multi-scale supervision benefits both final predictions and the predictive capability of intermediate representations.
HiLNO also generalizes effectively to spatial resolutions unseen during training.
Future work will extend HiLNO to time-dependent PDEs and investigate adaptive spatial supports and physical constraints to further improve physical consistency and generalization.

\section*{Acknowledgements}
This work was partially supported by the National Natural Science Foundation of China, No. 12171240. This work is partially supported by High Performance Computing Platform of Nanjing University of Aeronautics and Astronautics.

\bibliographystyle{elsarticle-harv}
\bibliography{ref_mgt}

\appendix

\end{document}